\theoremstyle{plain}
\newtheorem{theorem}{Theorem}[section]
\newtheorem{corollary}[theorem]{Corollary}
\newtheorem{proposition}[theorem]{Proposition}
\newtheorem{lemma}[theorem]{Lemma}
\theoremstyle{definition}
\newtheorem{remark}[theorem]{Remark}
\newtheorem{example}[theorem]{Example}
\newtheorem{definition}[theorem]{Definition}
 \DeclareMathOperator{\re}{Re\,}
 \DeclareMathOperator{\Id}{\mathrm{Id}}
 \DeclareMathOperator{\id}{\mathrm{id}}
 \newcommand{\ec}{\mathrm{co}}
\newcommand{\K}{\mathbb{K}}
\newcommand{\C}{\mathbb{C}}
\newcommand{\R}{\mathbb{R}}
\newcommand{\N}{\mathbb{N}}
\newcommand{\eps}{\varepsilon}
\newcommand{\Wa}{\widetilde{W}}
\newcommand{\ixy}{i_{X,Y}}
\newcommand{\iyy}{i_{Y,Y}}
\renewcommand{\leq}{\leqslant}
\renewcommand{\geq}{\geqslant}
\begin{document}
\title{On different definitions of numerical range}

\author{Miguel Mart\'{\i}n}

\address{Departamento de An\'{a}lisis Matem\'{a}tico \\ Facultad de
 Ciencias \\ Universidad de Granada \\ 18071 Granada, Spain
\newline
\href{http://orcid.org/0000-0003-4502-798X}{ORCID: \texttt{0000-0003-4502-798X} }
 }
\email{mmartins@ugr.es}

\date{February 15th, 2015}

\thanks{Supported by Spanish MICINN and FEDER project no.\ MTM2012-31755, and by Junta de Andaluc\'{\i}a and FEDER grants FQM-185 and P09-FQM-4911.}

\subjclass[2000]{Primary 47A12; Secondary 46B20}
\keywords{Banach space; numerical ranges}

\begin{abstract}
We study the relation between the intrinsic and the spatial numerical ranges with the recently introduced ``approximated'' spatial numerical range. As main result, we show that the intrinsic numerical range always coincides with the convex hull of the approximated spatial numerical range. Besides, we show sufficient conditions and necessary conditions to assure that the approximated spatial numerical range coincides with the closure of the spatial numerical range.
\end{abstract}

\maketitle
\thispagestyle{empty}

\section{Introduction}
The concept of numerical range of an operator goes back to O.~Toeplitz, who defined in 1918 the field of values of a matrix, a concept easily extensible to bounded linear operators on a Hilbert space. In the 1950's, a concept of numerical range of elements of unital Banach algebras was used to relate the geometrical and algebraic properties of the unit, starting with a paper by H.~Bohnenblust and S.~Karlin where it is shown that the unit is a vertex of the unit ball of the algebra, and was also used in the developing of Vidav's characterization of $C^*$-algebras. Later on, in the 1960's, G.~Lumer and F.~Bauer gave independent but related extensions of Toeplitz's numerical range to bounded linear operators on Banach spaces which do not use the algebraic structure of the space of all bounded linear operators. We refer the reader to the monographs by F.~Bonsall and J.~Duncan \cite{B-D1,B-D2} and to sections \S2.1 and \S2.9 of the very recent book \cite{Cabrera-Rodriguez} by M.~Cabrera and A.~Rodr\'{\i}guez-Palacios for more information and background. Let us present the necessary definitions and notation. We will work with both real and complex Banach spaces. We write $\K$ to denote the base field ($=\R$ or $\C$) and $\re(\cdot)$ to denote the real part in the complex case and just the identity in the real case. Given a Banach space $X$, $S_X$ is its unit sphere, $X^*$ is the topological dual space of $X$ and $L(X)$ is the Banach algebra of all bounded linear operators on $X$. The \emph{intrinsic numerical range} (or \emph{algebra numerical range}) of $T\in L(X)$ is
$$
V(T):=\bigl\{\Phi(T)\,:\,\Phi\in L(X)^*,\,\|\Phi\|=\Phi(\Id)=1 \bigr\},
$$
where $\Id$ denotes the identity operator on $X$. The
\emph{spatial numerical range} of $T$ is given by
$$
W(T):=\bigl\{x^*(Tx)\,:\, x\in S_{X},\,x^*\in S_{X^*},\, x^*(x)=1\bigr\}.
$$
These two ranges coincide in the case when $X$ is a Hilbert space. For arbitrary Banach spaces, the equality
\begin{equation*}
\overline{\ec\,W(T)}=V(T)
\end{equation*}
is valid for all $T\in L(X)$ ($\ec\, A$ denotes the convex hull of a set $A$). This equality allows to study algebra numerical ranges of operators without taking into account elements of the (wild) topological dual of the space of operators and, conversely, to get easier proofs of results on spatial numerical ranges.

The above equality has been extended to more general setting, as bounded uniformly continuous functions from the unit sphere of a Banach space to the space \cite{Harris,Rodriguez-JMAA}, but it is known that it is not possible to be extended to all bounded functions \cite{Rodriguez-PAMS}. On the other hand, it is possible to define numerical ranges of operators (or functions) with respect to a fixed operator (or function) which plays the rolle of the identity operator, as it is done in \cite{Harris}. For the intrinsic numerical range, the definition is immediate (see Definition~\ref{def:intrinisic}), but the case of the spatial numerical range (see Definition~\ref{def:spatial}) is more delicate, as we may produce empty numerical ranges. Very recently, a definition of an ``approximated'' spatial numerical range has been introduced \cite{Ardalani} for operators between different Banach spaces, which can be easily extended to bounded functions (see Definition~\ref{def:approximate-spatial}) and which is never empty.
Let us present de definitions of numerical ranges that we will use in the paper in full generality, that is, for bounded functions from a non-empty set into a Banach space. Given a Banach space $Y$ and a non-empty set $\Gamma$, we write $\ell_\infty(\Gamma,Y)$ to denote the Banach space of all bounded functions from $\Gamma$ into $Y$ endowed with the sumpremum norm.

The first definition is the so-called intrinsic numerical range (with respect to a fix function) which appeared, in different settings, with many names and many notations, since the 1960's (holomorphic functions \cite{Harris-holomorphic}, bounded uniformly continuous functions \cite{Harris}, bounded linear operators \cite{B-D1}, bounded functions \cite{Rodriguez-PAMS,Rodriguez-JMAA}, among others). Also, it is nothing but a particular case of the so-called numerical range spaces (see \cite{M-M-P-R} or \cite[\S 2.1 and \S 2.9]{Cabrera-Rodriguez}).

\begin{definition}[Intrinsic numerical range]\label{def:intrinisic}
Let $Y$ be a Banach space and let $\Gamma$ be a non-empty set. We fix $g\in \ell_\infty(\Gamma,Y)$ with $\|g\|=1$. For every $f\in \ell_\infty(\Gamma,Y)$, the \emph{intrinsic numerical range} of $f$ relative to $g$ is
$$
V_g(f):= \bigl\{\Phi(f)\,:\, \Phi\in \ell_\infty(\Gamma,Y)^*,\, \|\Phi\|=\Phi(g)=1\bigr\}.
$$
\end{definition}

Observe that if $\mathcal{M}$ is a closed subspace of $\ell_\infty(\Gamma,Y)$ containing $g$ and $f$, then
$V_g(f)$ can be calculated using only elements in the dual of $\mathcal{M}$ (by Hahn-Banach theorem), so it only depends on the geometry around $f$ and $g$. This is why this numerical range is called ``intrinsic''. Let us also observe that the intrinsic numerical range is a compact and convex subset of $\K$.

The second numerical range we will deal with is the spatial numerical range, which extends the corresponding definition for bounded linear operators.

\begin{definition}[Spatial numerical range]\label{def:spatial}
Let $Y$ be a Banach space and let $\Gamma$ be a non-empty set. We fix $g\in \ell_\infty(\Gamma,Y)$ with $\|g\|=1$. For every $f\in \ell_\infty(\Gamma,Y)$, the \emph{spatial numerical range} of $f$ relative to $g$ is given by
$$
W_g(f):= \bigl\{y^*(f(t))\,:\, y^*\in S_{Y^*},\, t\in \Gamma,\, y^*(g(t))=1\bigr\}.
$$
\end{definition}

Let us observe that $W_g(f)$ is not empty if only if $g(\Gamma)$ intersects $S_Y$. This concept appeared for uniformly continuous functions $g$ and $f$ in a paper by L.~Harris \cite{Harris}. It also has been studied for particular cases of the function $g$, as the inclusion from $S_Y$ into $Y$ \cite{Rodriguez-JMAA,Rodriguez-PAMS} or, more generally, the inclusion from the unit sphere of a subspace into $Y$ \cite{MarMerPay}.

Finally, the last definition is the one given very recently by M.~Ardalani \cite{Ardalani} for bounded linear operators between Banach spaces, which can be extended to arbitrary bounded functions.

\begin{definition}[Approximated spatial numerical range \cite{Ardalani}]\label{def:approximate-spatial}
Let $Y$ be a Banach space and let $\Gamma$ be a non-empty set. We fix $g\in \ell_\infty(\Gamma,Y)$ with $\|g\|=1$. For every $f\in \ell_\infty(\Gamma,Y)$ the \emph{approximated spatial numerical range} of $f$ relative to $g$ is
$$
\Wa_g(f):= \bigcap_{\eps>0} \overline{\bigl\{y^*(f(t))\,:\, y^*\in S_{Y^*},\, t\in \Gamma,\, \re y^*(g(t))>1-\eps\bigr\}}.
$$
\end{definition}

Observe that $\Wa_g(f)$ is a non-empty compact subset of $\K$.

The relation between the intrinsic and the spatial numerical ranges has been studied in the journal literature. Let us present some examples. Given a Banach space $Y$ and a closed subspace $X$ of $Y$, we write $\ixy$ to denote the inclusion from $S_X$ into $Y$, and we just write $\id=\iyy$. It was shown in \cite{Harris} that
$$
V_{\id}(f)= \overline{\ec\, W_{\id}(f)}
$$
when $f:S_Y\longrightarrow Y$ is bounded and uniformly continuous (actually, a more general result holds \cite{Rodriguez-JMAA}). The above equality also holds, in some cases, when $g=\ixy$ and $f$ is uniformly continuous \cite{MarMerPay}. On the other hand, if the equality above holds for all bounded functions $f$, then $Y$ is uniformly smooth \cite{Rodriguez-PAMS}.

The main objective in this paper is to show (Theorem~\ref{main-theorem}) that the equality
\begin{equation*}
V_g(f)=\ec\, \Wa_g(f)
\end{equation*}
holds for every $f,g\in \ell_\infty(\Gamma,Y)$. This is the content of section~\ref{sec:twonumericalranges}. Even in the case when $\Gamma=S_Y$ and $g=\id$, the result is interesting as it is not true replacing the approximated numerical range by the spatial numerical range (see section~\ref{sec:spatial}).

We deal in section~\ref{sec:spatial} with the relationship between the spatial and the approximate spatial numerical range. In this case, we look for conditions for which the equality
\begin{equation}\label{eq:Wa=overlineW}\tag{$\star$}
\Wa_g(f) =\overline{W_g(f)}
\end{equation}
holds true. It has been shown in \cite{Ardalani} that \eqref{eq:Wa=overlineW} holds for $\Gamma=S_Y$ and $g=\id$ when $f$ is (the restriction to $S_Y$ of) a bounded linear operator. We extend this result to bounded uniformly continuous functions $f$. Besides, we show tht this cannot be extended to arbitrary bounded functions unless the space $Y$ is uniformly smooth, in which case one actually has that \eqref{eq:Wa=overlineW} works for every non-empty set $\Gamma$ and for all bounded functions $f$, which the only requirement that $g(\Gamma)\subset S_Y$. On the other hand, we show that if $\Gamma$ is a compact topological space, then \eqref{eq:Wa=overlineW} holds for all continuous $f$ and $g$. In particular, one has the validity of \eqref{eq:Wa=overlineW} when $X$ is a finite-dimensional subspace of $Y$, $\Gamma=S_X$, $g=\ixy$ and $f$ is continuous. We also show that this result cannot be extended to any infinite-dimensional $X$.

Let us finish the introduction saying that the study of the approximate spatial numerical range allows to better understand when the equality
$$
V_g(f)= \overline{\ec\, W (\ell_\infty(\Gamma,Y),g,f)}
$$ holds, as it can be deduced from  equality \eqref{eq:Wa=overlineW}, and this one only involves ``spatial type'' numerical ranges and does not force to work with the dual of $\ell_\infty(\Gamma,Y)$.

\section{Relation between the intrinsic numerical range and the approximated spatial numerical range} \label{sec:twonumericalranges}

Our main goal in this section is to prove the following relation between these two numerical ranges.

\begin{theorem}\label{main-theorem}
Let $Y$ be a Banach space, let $\Gamma$ be a non-empty set and consider a fixed $g\in \ell_\infty(\Gamma,Y)$ with $\|g\|=1$.
Then
$$
V_g(f)=\ec\, \Wa_g(f)
$$
for every $f\in \ell_\infty(\Gamma,Y)$.
\end{theorem}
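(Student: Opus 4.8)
The strategy is to prove the two inclusions separately, with the easy one being $\ec\,\Wa_g(f)\subseteq V_g(f)$ and the substantive one being $V_g(f)\subseteq \ec\,\Wa_g(f)$. For the first inclusion, I would start from a point $y^*(f(t))$ with $y^*\in S_{Y^*}$, $t\in\Gamma$ and $\re y^*(g(t))>1-\eps$, and perturb the evaluation functional: the map $h\mapsto y^*(h(t))$ on $\ell_\infty(\Gamma,Y)$ has norm one and sends $g$ to something within $\eps$ of $1$ (in real part), so a routine normalization argument (divide by the value at $g$, or use that the set of states is weak-* compact) shows any limit point of such evaluations lies in $V_g(f)$. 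Taking the intersection over $\eps>0$ gives $\Wa_g(f)\subseteq V_g(f)$, and since $V_g(f)$ is convex this yields $\ec\,\Wa_g(f)\subseteq V_g(f)$; closedness is automatic since $V_g(f)$ is compact and $\Wa_g(f)$ is compact, so $\ec\,\Wa_g(f)$ is already closed.

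For the reverse inclusion I would argue by contradiction using a separation theorem, which is the natural tool since both sets live in $\K$ (viewed as $\R$ or $\R^2$) and $\ec\,\Wa_g(f)$ is compact and convex. Suppose $\lambda\in V_g(f)\setminus \ec\,\Wa_g(f)$; then there is a real-linear functional, i.e.\ a direction $\mu\in\K$, and a real number $\alpha$ with $\re(\bar\mu\lambda)>\alpha\ge \re(\bar\mu z)$ for all $z\in\Wa_g(f)$. The key computational lemma I expect to need is an expression for the "support function" of $\Wa_g(f)$ in a given direction $\mu$, namely
\[
\max_{z\in\Wa_g(f)}\re(\bar\mu z)=\inf_{\eps>0}\ \sup\bigl\{\re\bigl(\bar\mu\, y^*(f(t))\bigr)\,:\, y^*\in S_{Y^*},\ t\in\Gamma,\ \re y^*(g(t))>1-\eps\bigr\},
\]
which follows because intersecting a decreasing family of compact sets commutes with taking the max of a continuous linear function. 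Then the separation inequality tells me that for some $\eps_0>0$,
\[
\re\bigl(\bar\mu\, y^*(f(t))\bigr)\le \alpha \qquad\text{whenever } y^*\in S_{Y^*},\ t\in\Gamma,\ \re y^*(g(t))>1-\eps_0.
\]

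The heart of the proof — and the step I expect to be the main obstacle — is converting this "spatial" inequality into a bound on all of $V_g(f)$, thereby contradicting $\re(\bar\mu\lambda)>\alpha$. Equivalently, I must show that the functional $h\mapsto \re(\bar\mu\,\Phi(h))$ is controlled, for every state $\Phi$ relative to $g$, by quantities of the form $\sup_t \re(\bar\mu\,y^*_t(f(t)))$ over $y^*_t$ nearly normalizing $g(t)$. The mechanism should be: given a state $\Phi$ on $\ell_\infty(\Gamma,Y)$ with $\|\Phi\|=\Phi(g)=1$, one builds, for each small $\eps$, a function $t\mapsto y_t^*\in S_{Y^*}$ with $\re y_t^*(g(t))>1-\eps$ on a "large" set (large in the sense measured by $\Phi$), and then estimates $\re(\bar\mu\,\Phi(f))$ by pushing $\Phi$ through the assignment $h\mapsto (y_t^*(h(t)))_t\in \ell_\infty(\Gamma)$; a Hahn-Banach/Goldstine-type density argument or a direct ultrafilter construction realizes $\Phi$ as a weak-* limit of convex combinations of such evaluation functionals at points where $g$ is nearly normed. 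Concretely, one uses that $\|\Phi\|=\Phi(g)=1$ forces $\Phi$ to be "concentrated" where $\|g\|$ is close to $1$: for any $h$, $\re\Phi(h)=\Phi(g)\,\re\Phi(h)$ and convexity of the state space let one reduce to extreme states, which by a Krein-Milman / Arens-type argument are (weak-* limits of) scalar multiples of point evaluations $h\mapsto y^*(h(t))$ with $y^*(g(t))$ close to $1$. Assembling these estimates gives $\re(\bar\mu\,\Phi(f))\le\alpha$ for every such $\Phi$, hence $\re(\bar\mu\lambda)\le\alpha$, the desired contradiction. The delicate points to get right are the uniformity in $\eps$ when passing from "$\Phi$-large set" to a genuine pointwise choice of $y_t^*$, and justifying the reduction to extreme/point-evaluation states on the non-separable space $\ell_\infty(\Gamma,Y)^*$.
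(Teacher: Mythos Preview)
Your overall strategy matches the paper's: the easy inclusion $\ec\,\Wa_g(f)\subseteq V_g(f)$ is handled exactly as you say (evaluation functionals plus Lemma~\ref{lemma-easy} and convexity of $V_g(f)$), and for the hard inclusion the paper also reduces to a support-function inequality (via rotation $f\mapsto\theta f$, which is equivalent to your separation argument) together with your ``key computational lemma'' about $\max\re\Wa_g(f)$ as an infimum of sups (this is Lemmas~\ref{lemma:decreasing-Wa} and~\ref{Javier}).

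The one place where the paper is cleaner than your sketch is precisely the step you flag as delicate. You reach for Krein--Milman, extreme states, or ultrafilters to realize a state $\Phi$ via point evaluations; none of that is needed. The paper's Lemma~\ref{lemma-Harris} (extending Harris) proceeds as follows: since $\|h\|=\sup\{\re y^*(h(t)):y^*\in S_{Y^*},\,t\in\Gamma\}$, the set $\Upsilon=\{y^*\otimes\delta_t\}$ has weak$^*$-closed convex hull equal to $B_{\ell_\infty(\Gamma,Y)^*}$, so $\Phi$ is approximated in the weak$^*$ topology by \emph{finite} convex combinations $\sum_k\alpha_k\,y_k^*\otimes\delta_{t_k}$. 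Testing this against $g$ and $f$ simultaneously, the condition $\Phi(g)=1$ forces $\sum_k\alpha_k\re y_k^*(g(t_k))$ to be near $1$, and a one-line Markov-type estimate shows that the total mass $\sum_{k\in K}\alpha_k$ on indices with $\re y_k^*(g(t_k))\le 1-\eps'$ is at most $\eps'$; discarding those indices barely changes $\sum_k\alpha_k\re y_k^*(f(t_k))$, and then a single index $k$ in the remaining set does the job by convexity. So the ``uniformity in $\eps$'' and the ``reduction to point evaluations'' that worried you are both handled by this elementary concentration argument, with no structure theory of $\ell_\infty(\Gamma,Y)^*$ required.
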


In order to prove the theorem, we need some lemmata.

\begin{lemma}\label{lemma-easy} Let $Y$ be a Banach space and $\Gamma$ be a non-empty set. Then
\begin{align*}
V_g(f) = \bigcap_{\eps>0}\overline{\bigl\{\Phi(f)\, : \, \Phi\in \ell_\infty(\Gamma,Y)^*,\ \|\Phi\|=1,\  \re \Phi(g)>1-\eps\bigr\} }
\end{align*}
for every $f,g\in \ell_\infty(\Gamma,Y)$.
\end{lemma}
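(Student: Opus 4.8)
The plan is to prove the two inclusions separately; one is immediate and the other rests on a weak-$*$ compactness argument.

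For the inclusion of $V_g(f)$ in the right-hand side, suppose $\Phi\in\ell_\infty(\Gamma,Y)^*$ satisfies $\|\Phi\|=\Phi(g)=1$. Then $\re\Phi(g)=1>1-\eps$ for every $\eps>0$, so $\Phi(f)$ belongs to each of the sets whose closures are being intersected, hence to the intersection itself. This already gives $V_g(f)\subseteq\bigcap_{\eps>0}\overline{\{\Phi(f):\|\Phi\|=1,\ \re\Phi(g)>1-\eps\}}$.

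For the reverse inclusion, fix $\lambda$ in the right-hand side. Since $\|g\|=1$, Hahn--Banach provides at least one norm-one functional $\Phi$ with $\Phi(g)=1$, so for every $\eps>0$ the set $A_\eps:=\{\Phi(f):\Phi\in\ell_\infty(\Gamma,Y)^*,\ \|\Phi\|=1,\ \re\Phi(g)>1-\eps\}$ is nonempty, and by assumption $\lambda\in\overline{A_{1/n}}$ for every $n\in\N$. Choose accordingly $\Phi_n\in\ell_\infty(\Gamma,Y)^*$ with $\|\Phi_n\|=1$, $\re\Phi_n(g)>1-1/n$, and $|\Phi_n(f)-\lambda|<1/n$. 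The sequence $(\Phi_n)$ lies in the closed dual unit ball, which is weak-$*$ compact by the Banach--Alaoglu theorem, so it admits a subnet $(\Phi_{n_\alpha})$ converging weak-$*$ to some $\Phi_0$ with $\|\Phi_0\|\le 1$.

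It then remains to verify that $\Phi_0$ witnesses $\lambda\in V_g(f)$. Evaluating at $g$: since $\re\Phi_{n_\alpha}(g)\to1$ while $|\Phi_{n_\alpha}(g)|\le1$, the imaginary part is squeezed to $0$ and $\Phi_{n_\alpha}(g)\to1$, so $\Phi_0(g)=1$; combined with $\|\Phi_0\|\le1$ this forces $\|\Phi_0\|=1$. Evaluating at $f$: from $|\Phi_n(f)-\lambda|<1/n$ we get $\Phi_{n_\alpha}(f)\to\lambda$, hence $\Phi_0(f)=\lambda$. Thus $\Phi_0$ satisfies $\|\Phi_0\|=\Phi_0(g)=1$ and $\Phi_0(f)=\lambda$, i.e. $\lambda\in V_g(f)$. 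I do not anticipate a genuine obstacle; the only two points deserving a word of care are the passage to a subnet of the sequence $(\Phi_n)$ (recall that a subnet of a sequence has its indices eventually beyond any prescribed $N$, so the estimates pass to the subnet) and the observation that, in the complex case, controlling only $\re\Phi_n(g)$ together with $\|\Phi_n\|\le1$ is enough to recover $\Phi_n(g)\to1$.
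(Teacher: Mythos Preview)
Your proof is correct and follows exactly the Banach--Alaoglu route that the paper itself indicates (the paper merely says the lemma ``can be easily proved using the Banach-Alaoglu theorem'' and cites an alternative reference). Your handling of the two delicate points---passing from the sequence to a subnet while retaining the $1/n$ estimates, and squeezing the imaginary part of $\Phi_n(g)$ to zero in the complex case---is accurate.
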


\begin{proof}
It can be easily proved using the Banach-Alaouglu theorem, but also follows from \cite[Fact~2.9.63]{Cabrera-Rodriguez} (which, actually, does not depend on the Banach-Alaouglu theorem).
\end{proof}

The proof of the next result is completely straightforward, so we omit it.

\begin{lemma}\label{lemma:decreasing-Wa} Let $Y$ be a Banach space and $\Gamma$ be a non-empty set. Given any sequence $\{\eps_n\}$ of positive numbers
decreasing to $0$, we have that
$$
\Wa_g(f):=\bigcap_{n\in \N}\overline{\bigl\{y^*(f(t))\,:\, y^*\in S_{Y^*},\, t\in \Gamma,\, \re y^*(g(t))>1-\eps_n\bigr\}}.
$$
for every $f,g\in \ell_\infty(\Gamma,Y)$.
\end{lemma}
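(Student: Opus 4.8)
The plan is to reduce everything to the monotonicity of the sets being intersected. For $\eps>0$, abbreviate
\[
A_\eps:=\bigl\{y^*(f(t))\,:\,y^*\in S_{Y^*},\ t\in\Gamma,\ \re y^*(g(t))>1-\eps\bigr\},
\]
so that, by Definition~\ref{def:approximate-spatial}, $\Wa_g(f)=\bigcap_{\eps>0}\overline{A_\eps}$, and what has to be shown is the identity $\bigcap_{\eps>0}\overline{A_\eps}=\bigcap_{n\in\N}\overline{A_{\eps_n}}$ for an arbitrary sequence $\eps_n\downarrow 0$.

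The first (and only) real observation is that the assignment $\eps\longmapsto A_\eps$ is non-decreasing: if $0<\eps'\le\eps''$, then any pair $(y^*,t)$ with $\re y^*(g(t))>1-\eps'$ also satisfies $\re y^*(g(t))>1-\eps''$, so $A_{\eps'}\subseteq A_{\eps''}$ and hence $\overline{A_{\eps'}}\subseteq\overline{A_{\eps''}}$. With this at hand, the inclusion ``$\subseteq$'' is immediate, since $\bigcap_{n}\overline{A_{\eps_n}}$ is the intersection of a subfamily of $\{\overline{A_\eps}\}_{\eps>0}$. For ``$\supseteq$'', I would take $z\in\bigcap_{n}\overline{A_{\eps_n}}$ and an arbitrary $\eps>0$; since $\eps_n\downarrow 0$, there is some $n$ with $\eps_n\le\eps$, and then $z\in\overline{A_{\eps_n}}\subseteq\overline{A_\eps}$ by the monotonicity just noted. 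As $\eps>0$ was arbitrary, $z\in\bigcap_{\eps>0}\overline{A_\eps}=\Wa_g(f)$, which finishes the argument.

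There is essentially no obstacle here: the content is just that a sequence decreasing to $0$ is cofinal from below in $(0,\infty)$, so intersecting the nested family $\{\overline{A_\eps}\}_{\eps>0}$ over such a sequence yields the same set as intersecting it over all $\eps>0$. The only point to keep straight is the direction of the nesting — a smaller $\eps$ means a larger threshold $1-\eps$, hence fewer admissible functionals, hence a smaller set $A_\eps$ — which is precisely what makes the substitution of the full parameter range by a sequence harmless (and, incidentally, exhibits $\Wa_g(f)$ as a decreasing intersection of nonempty compact subsets of $\K$, which is what one will want later).
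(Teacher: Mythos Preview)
Your argument is correct and is exactly the straightforward monotonicity/cofinality reasoning one expects; the paper itself omits the proof of this lemma as ``completely straightforward'', so there is nothing to compare against beyond noting that your write-up supplies precisely the details the authors chose to leave out.
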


\begin{lemma}\label{Javier}
Let $\{W_n\}$ be a decreasing sequence of compact subsets of $\K$, and define
$W=\bigcap_{n\in \N} W_n$. Then
$\displaystyle \sup \re W = \inf_{n\in \N} \sup \re W_n$.
\end{lemma}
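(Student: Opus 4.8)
The plan is to prove the two inequalities separately, the easy one being $\sup \re W \le \inf_{n\in\N}\sup\re W_n$: since $W\subseteq W_n$ for every $n$, we get $\sup\re W\le\sup\re W_n$ for each $n$, hence $\sup\re W\le\inf_{n\in\N}\sup\re W_n$. With the usual convention $\sup\re\emptyset=-\infty$, if some $W_n$ is empty then $W$ is empty and both sides equal $-\infty$, so we may assume all $W_n$ non-empty; as they are compact, in particular bounded, and all contained in $W_1$, the quantity $\alpha:=\inf_{n\in\N}\sup\re W_n$ is a finite real number.

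For the reverse inequality, the idea is to intersect each $W_n$ with the closed half-plane $\{\re z\ge\alpha\}$ and apply a compactness argument. Concretely, set
$$
K_n:=\{z\in W_n\,:\,\re z\ge\alpha\}.
$$
Each $K_n$ is the intersection of the compact set $W_n$ with a closed set, hence compact; it is non-empty because, $W_n$ being compact and $z\mapsto\re z$ continuous, the value $\sup\re W_n$ is attained at some point of $W_n$, and that point lies in $K_n$ since $\sup\re W_n\ge\alpha$; and $\{K_n\}$ is decreasing because $W_{n+1}\subseteq W_n$ forces $K_{n+1}\subseteq K_n$.

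By Cantor's intersection theorem (a decreasing sequence of non-empty compact sets has non-empty intersection), pick $w\in\bigcap_{n\in\N}K_n$. Then $w\in W_n$ for all $n$, so $w\in W$, while $\re w\ge\alpha$. Hence $\sup\re W\ge\re w\ge\alpha=\inf_{n\in\N}\sup\re W_n$, and combining with the first inequality yields the claimed equality. I do not expect any genuine obstacle; the only point needing a little care is the non-emptiness of the auxiliary sets $K_n$, which is precisely where compactness of the $W_n$—guaranteeing that the supremum of the real part is attained—is used.
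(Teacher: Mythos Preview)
Your proof is correct. Both your argument and the paper's hinge on the same compactness idea, but they package it differently: the paper picks for each $n$ a point $t_n = \max \re W_n$, observes that $t_n \downarrow t_0 = \alpha$, and then shows $t_0 \in \re W_q$ for each fixed $q$ by passing to the limit inside the closed set $\re W_q$; you instead intersect each $W_n$ with the half-plane $\{\re z \ge \alpha\}$ and invoke Cantor's intersection theorem directly on the resulting sets $K_n$. Your route is arguably cleaner, since it produces in one stroke a point of $W$ with $\re w \ge \alpha$, whereas the paper's phrasing ``$t_0 \in \re W_q$ for every $q$, therefore $t_0 \in \re W$'' tacitly uses that $\re\bigl(\bigcap_n W_n\bigr) = \bigcap_n \re W_n$ for a decreasing sequence of compacta, which itself requires a small compactness argument. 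Your explicit handling of the empty case is also a nice touch the paper omits.
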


\begin{proof}
That $\sup \re W \leq \inf_{n\in \N} \sup \re W_n$ is obvious as $W\subseteq W_n$ for every $n\in \N$. To get the reversed inequality, we first write
$t_n=\max \re W_n$ for every $n\in \N$, and observe that $\{t_n\}$ is a decreasing and bounded from bellow sequence, and write $t_0=\lim t_n$. On the one hand, observe that
$$
\inf_{n\in \N} \sup \re W_n = t_0.
$$
On the other hand, for every $q\in \N$, $t_{q+n}\in \re W_{q+n}\subseteq \re W_q$ for every $n\in \N$ so, taking limit in $n$, we get that $t_0\in \re W_q$. Therefore, $t_0\in \re W$ and so, $\sup \re W \geq t_0$.
\end{proof}

The next lemma is the key ingredient in the proof of Theorem~\ref{main-theorem}. It is an extension of a result of L.~Harris \cite{Harris}, proved for bounded uniformly continuous functions. We include the proof, which is an extension of the one given there, for the sake of completeness.

\begin{lemma}[\mbox{Extension of \cite[Lemma~1]{Harris}}]\label{lemma-Harris}
Let $Y$ be a Banach space and let $\Gamma$ be a non-empty set. Then,  given $f,g\in \ell_\infty(\Gamma,Y)$ with $\|g\|=1$ and $\Phi\in \ell_\infty(\Gamma,Y)^*$ with $\|\Phi\|=\Phi(g)=1$, there exist $t\in \Gamma$ and $y^*\in S_{Y^*}$ such that
$$
\re \Phi(f) < \re y^*(f(t))  + \eps \qquad \text{and} \qquad \re y^*(g(t)) > 1-\eps.
$$
\end{lemma}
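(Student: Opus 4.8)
The plan is to adapt Harris's perturbation argument, exploiting that the norm on $\ell_\infty(\Gamma,Y)$ is literally a supremum over $\Gamma$, so that near-optimal points $t\in\Gamma$ are available with no continuity hypothesis whatsoever. Fix $\eps>0$; we may assume it is small and that $f\neq 0$ (if $f=0$ the statement is trivial once one picks $t$ with $\|g(t)\|>1-\eps$). Choose a \emph{real} number $\lambda>0$ small enough that $\lambda\bigl(2\|f\|+\tfrac{\eps}{2}\bigr)<\eps$, and set $\delta=\tfrac{\lambda\eps}{2}$; both parameters are fixed \emph{before} selecting any point of $\Gamma$.

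First I would test $\Phi$ against the perturbed function $g+\lambda f$. Since $\|\Phi\|=1$, $\Phi(g)=1$, and $\lambda$ is real,
$$
1+\lambda\,\re\Phi(f)=\re\Phi(g+\lambda f)\le \|g+\lambda f\|=\sup_{s\in\Gamma}\|g(s)+\lambda f(s)\|,
$$
so there exists $t\in\Gamma$ with $\|g(t)+\lambda f(t)\|>1+\lambda\,\re\Phi(f)-\delta$; the right-hand side is positive by the choice of $\lambda$, hence $z:=g(t)+\lambda f(t)\neq 0$. By the Hahn--Banach theorem pick $y^*\in S_{Y^*}$ with $y^*(z)=\|z\|$, which in particular gives the identity $\re y^*(g(t))+\lambda\,\re y^*(f(t))=\|g(t)+\lambda f(t)\|$.

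It then remains to read off the two desired inequalities from this identity together with the elementary bounds $\re y^*(g(t))\le\|g\|=1$, $\re y^*(f(t))\le\|f\|$, and $\re\Phi(f)\ge-\|f\|$. Isolating $\re y^*(g(t))$ yields $\re y^*(g(t))\ge 1-2\lambda\|f\|-\delta=1-\lambda\bigl(2\|f\|+\tfrac{\eps}{2}\bigr)>1-\eps$; isolating $\re y^*(f(t))$ and dividing by $\lambda$ yields $\re y^*(f(t))\ge\re\Phi(f)-\delta/\lambda=\re\Phi(f)-\tfrac{\eps}{2}$, that is, $\re\Phi(f)<\re y^*(f(t))+\eps$. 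This produces the required $t$ and $y^*$.

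The argument is essentially bookkeeping; the only points needing a little care are the complex case (handled uniformly by keeping $\lambda$ real and working throughout with real parts) and the order in which the small parameters $\lambda$ and $\delta$ are chosen. I expect no genuine obstacle, since the replacement of Harris's uniform-continuity hypothesis by the plain supremum-norm structure of $\ell_\infty(\Gamma,Y)$ is exactly what makes the selection of a near-optimal $t\in\Gamma$ automatic.
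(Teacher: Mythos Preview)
Your argument is correct and complete: the perturbation $g+\lambda f$, a near-optimal choice of $t$ for the sup norm, and a norming $y^*\in S_{Y^*}$ immediately give both inequalities by elementary algebra. The reductions ($f\neq 0$, $\eps$ small so that $z\neq 0$) are harmless.

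However, the route differs from the paper's. The paper does \emph{not} perturb $g$; instead it observes that the unit ball of $\ell_\infty(\Gamma,Y)^*$ is the weak$^*$-closed convex hull of the elementary functionals $y^*\otimes\delta_t$, approximates $\Phi$ by a finite convex combination $\sum_k \alpha_k\, y_k^*\otimes\delta_{t_k}$ that is close on both $f$ and $g$, and then splits the index set according to whether $\re y_k^*(g(t_k))>1-\eps'$, showing the ``bad'' indices carry negligible total weight. Your proof is more elementary and self-contained: it avoids the dual-ball description and any weak$^*$ considerations, needing only Hahn--Banach at a single vector of $Y$. The paper's approach, on the other hand, makes explicit the structural fact (the $y^*\otimes\delta_t$ generate $B_{\ell_\infty(\Gamma,Y)^*}$) that underlies the whole comparison between $V_g$ and $\Wa_g$, and its convex-splitting trick is reusable in settings where a single ``perturbed norm'' computation is less natural.
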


\begin{proof}
Consider the subset of $S_{\ell_\infty(\Gamma,Y)^*}$ given by
$$
\Upsilon=\bigl\{y^*\otimes \delta_t\,:\, y^*\in S_{Y^*},\, t\in \Gamma \bigr\},
$$
where $[y^*\otimes \delta_t](h)=y^*(h(t))$ for every $h\in \ell_\infty(\Gamma,Y)$, every $t\in \Gamma$ and every $y^*\in S_{Y^*}$. Then, the unit ball of $\ell_\infty(\Gamma,Y)^*$ is the weak$^*$-closed convex hull of $\Upsilon$ (indeed, this follows from the immediate fact that
$$
\|h\|=\sup \bigl\{\re [y^*\otimes \delta_t](h)\,:\, y^*\otimes \delta_t\in \Upsilon\bigr\}
$$
for every $h\in \ell_\infty(\Gamma,Y)$). Therefore, for $0<\eps'<\eps$ satisfying $2\|f\|\eps'\leq \eps$, we may find $y_1^*,\ldots,y_n^*\in S_{Y^*}$, $t_1,\ldots,t_n\in \Gamma$, $\alpha_1,\ldots,\alpha_n\in [0,1]$ with $\sum_{k=1}^n \alpha_k=1$ such that
\begin{align*}
\sum_{k=1}^n \alpha_k \re y_k^*(f(t_k)) &> \re \Phi(f)-\eps/2 \\
\intertext{and}
\sum_{k=1}^n \alpha_k \re y_k^*(g(t_k)) &> 1-(\eps')^2.
\end{align*}
Now, consider
$$
J=\bigl\{k\,:\,1\leq k \leq n,\, \re y^*_k(g(t_k))>1-\eps'\bigr\}
$$
and let $K=\{1,\ldots,n\}\setminus J$. We have that
$$
1-(\eps')^2 < \sum_{k=1}^n \alpha_k \re y_k^*(g(t_k)) \leq \sum_{k\in J} \alpha_k + \sum_{k\in K} \alpha_k (1-\eps') = 1 - \eps'\sum_{k\in K}\alpha_k,
$$
from which we deduce that
$$
\sum_{k\in K}\alpha_k < \eps'.
$$
Now, we have that
\begin{align*}
\re \Phi(f) - \eps/2 & < \sum_{k=1}^n \alpha_k \re y_k^*(f(t_k)) \\ & \leq \sum_{k\in J} \alpha_k\re y_k^*(f(t_k)) + \|f\|\sum_{k\in K} \alpha_k \\ & < \sum_{k\in J} \alpha_k\re y_k^*(f(t_k)) + \eps/2.
\end{align*}
Therefore,
$$
\sum_{k\in J} \alpha_k\re y_k^*(f(t_k)) > \re \Phi(f) - \eps,
$$
and an obvious convexity argument provides the existence of $k\in J$ such that
$$
\re y_k^*(f(t_k))> \re \Phi(f) -\eps.
$$
On the other hand, as $k\in J$, we have
$$
\re y_k^*(g(t_k)) > 1-\eps'>1-\eps,
$$
finishing the proof.
\end{proof}

\begin{proof}[Proof of Theorem~\ref{main-theorem}]
We start with the inclusion ``$\supseteq$''. Fix $\lambda\in \Wa_g(f)$. For every $\eps>0$ and $\delta>0$, there exist $y^*\in S_{Y^*}$ and $t\in \Gamma$ such that
$$
\re y^*(g(t))>1-\eps \qquad \text{and} \qquad |\lambda - y^*(f(t))|<\delta.
$$
Then, defining $\Phi_{\eps,\delta}\in \ell_\infty(\Gamma,Y)^*$ by $\Phi_{\eps,\delta}(h)=y^*(h(t))$ for every $h\in \ell_\infty(\Gamma,Y)$, we have that $\|\Phi_{\eps,\delta}\|=1$, $\re \Phi_{\eps,\delta}(g)>1-\eps$ and $|\lambda - \Phi_{\eps,\delta}|<\delta$. Moving $\delta \downarrow 0$, it follows that
$$
\lambda\in \overline{\bigl\{\Phi(f)\, : \, \Phi\in \ell_\infty(\Gamma,Y)^*,\ \|\Phi\|=1,\ \re \Phi(g)>1-\eps\bigr\} }.
$$
Therefore, $\lambda \in V_g(f)$ using Lemma~\ref{lemma-easy}. Finally, $V_g(f)$ is convex, so the desired inclusion follows.

Let us prove the reversed inclusion. It is enough to prove that
$$
\sup \re V_g(f) \leq \sup \re \Wa_g(f)
$$
for every $f\in \ell_\infty(\Gamma,Y)$. Indeed, it is straightforward to show that for every $\theta\in \K$ with $|\theta|=1$ and every $f\in \ell_\infty(\Gamma,Y)$, we have
$$
V_g(\theta f)=\theta \, V_g(f)
\quad \text{ and } \quad
\Wa_g(\theta f)= \theta \, \Wa_g(f).
$$
From this, and the fact that both ranges are closed, the result follows (see \cite[Proposition~1]{Harris}, for instance).

Then, fix $f\in \ell_\infty(\Gamma,Y)$. Consider $\Phi\in \ell_\infty(\Gamma,Y)^*$ such that $\|\Phi\|=\Phi(g)=1$. For every $n\in \N$, write
$$
W_n=
\overline{\bigl\{y^*(f(t))\,:\, y^*\in S_{Y^*},\, t\in \Gamma,\, \re y^*(g(t))>1-\tfrac1n\bigr\}}
$$
and observe (Lemma~\ref{lemma:decreasing-Wa}) that
$$
\Wa_g(f)=\bigcap\nolimits_{n\in \N} W_n.
$$
Next, for each $n\in \N$ we use Lemma~\ref{lemma-Harris} to get $y_n^*\in S_{Y^*}$ and $t_n\in \Gamma$ such that
$$
\re \Phi(f) < \re y_n^*(f(t_n))  + \tfrac{1}{n} \qquad \text{and} \qquad \re y_n^*(g(t_n)) > 1-\tfrac{1}{n}.
$$
Therefore,
$$
\re \Phi(f) < \sup \re W_n + \tfrac{1}{n},
$$
for every $n\in \N$, and so
$$
\re \Phi(f) \leq \inf_{n\in \N} \sup \re W_n.
$$
Now, Lemma~\ref{Javier} shows that $\re \Phi(f)\leq \sup \re \Wa_g(f)$, and the arbitrariness of $\Phi$ gives $$
\sup \re V_g(f) \leq \sup \re \Wa_g(f),
$$
as desired.
\end{proof}

\section{Relation between the spatial numerical range and the approximate spatial numerical range}\label{sec:spatial}
Here, we would like to study conditions for which the closure of the spatial numerical range coincides with the approximate spatial numerical range. When $g=\id$, it is shown in \cite[Lemma~2.4]{Ardalani} that $\overline{W(T)}=\Wa_{\id} (T)$ for every bounded linear operator $T\in L(Y)$. The result easily extends to bounded uniformly continuous functions. We include a proof, extension of the one given in \cite{Ardalani} and consequence of the Bishop-Phelps-Bollob\'{a}s theorem, for the sake of completeness.

\begin{proposition}\label{prop:spatial-BPB}
Let $Y$ be a Banach space. Then,
$$
    \overline{W_{\id}(f)}= \Wa_{\id} (f)
$$
for every $f:S_Y\longrightarrow Y$ bounded and uniformly continuous.
\end{proposition}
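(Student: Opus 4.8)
The plan is to prove the two inclusions $\overline{W_{\id}(f)}\subseteq \Wa_{\id}(f)$ and $\Wa_{\id}(f)\subseteq \overline{W_{\id}(f)}$ separately. The first inclusion is the trivial one: for $x\in S_Y$ and $x^*\in S_{Y^*}$ with $x^*(x)=1$, one has in particular $\re x^*(x)>1-\eps$ for every $\eps>0$, so each point of $W_{\id}(f)$ lies in every set defining $\Wa_{\id}(f)$; since $\Wa_{\id}(f)$ is closed (indeed compact, as noted after Definition~\ref{def:approximate-spatial}), the closure $\overline{W_{\id}(f)}$ is contained in it. This requires no continuity of $f$ at all.

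For the reverse inclusion, fix $\lambda\in\Wa_{\id}(f)$. By Lemma~\ref{lemma:decreasing-Wa} (applied with $\eps_n=1/n$), for every $n\in\N$ there are $x_n\in S_Y$ and $x_n^*\in S_{Y^*}$ with $\re x_n^*(x_n)>1-\tfrac1n$ and $|x_n^*(f(x_n))-\lambda|<\tfrac1n$. The point of the Bishop--Phelps--Bollob\'as theorem is to replace the ``almost norm-attaining'' pair $(x_n^*,x_n)$ by a genuine norm-attaining pair $(\widetilde{x_n^*},\widetilde{x_n})$ that is close to it: given $\eta_n\to 0$ suitably, BPB produces $\widetilde{x_n}\in S_Y$ and $\widetilde{x_n^*}\in S_{Y^*}$ with $\widetilde{x_n^*}(\widetilde{x_n})=1$, $\|\widetilde{x_n}-x_n\|<\eta_n$ and $\|\widetilde{x_n^*}-x_n^*\|<\eta_n$ (the precise BPB estimates force $\eta_n\to 0$ once $1/n\to 0$). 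Then $\widetilde{x_n^*}(f(\widetilde{x_n}))\in W_{\id}(f)$ by definition, so it suffices to show $\widetilde{x_n^*}(f(\widetilde{x_n}))\to\lambda$.

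The estimate splits as
\[
\bigl|\widetilde{x_n^*}(f(\widetilde{x_n}))-\lambda\bigr|
\leq \bigl|\widetilde{x_n^*}(f(\widetilde{x_n}))-\widetilde{x_n^*}(f(x_n))\bigr|
+\bigl|\widetilde{x_n^*}(f(x_n))-x_n^*(f(x_n))\bigr|
+\bigl|x_n^*(f(x_n))-\lambda\bigr|.
\]
The third term is $<1/n\to 0$. The second term is at most $\|\widetilde{x_n^*}-x_n^*\|\,\|f\|\leq \eta_n\|f\|\to 0$, using boundedness of $f$. The first term is at most $\|f(\widetilde{x_n})-f(x_n)\|$, and here is the only place uniform continuity of $f$ enters: since $\|\widetilde{x_n}-x_n\|<\eta_n\to 0$, uniform continuity gives $\|f(\widetilde{x_n})-f(x_n)\|\to 0$. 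Hence $\widetilde{x_n^*}(f(\widetilde{x_n}))\to\lambda$, so $\lambda\in\overline{W_{\id}(f)}$, completing the proof.

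The main obstacle, such as it is, is bookkeeping the quantifiers in the BPB theorem: one must choose the target accuracy $\eta_n$ first and then invoke BPB with an input tolerance small enough (a function of $\eta_n$) that the ``$\re x_n^*(x_n)>1-\delta$'' hypothesis is met; since $\re x_n^*(x_n)>1-\tfrac1n$ is available for all $n$, one simply passes to a subsequence or reindexes so that $1/n$ is below the required BPB tolerance. No genuine difficulty arises beyond this, which is why the authors regard the statement as a routine extension of \cite[Lemma~2.4]{Ardalani}.
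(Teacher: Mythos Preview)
Your proposal is correct and follows essentially the same approach as the paper: both use the Bishop--Phelps--Bollob\'as theorem to upgrade an almost-norm-attaining pair to a genuine norm-attaining pair, then control the resulting perturbation of $y^*(f(y))$ via boundedness and uniform continuity of $f$. The only cosmetic difference is that the paper works with a fixed $\eps$ (choosing $\gamma$ from uniform continuity first, then invoking BPB with input tolerance $\gamma^2/2$), whereas you phrase the same argument sequentially; your remark about ``bookkeeping the quantifiers'' is exactly the step the paper handles by selecting $\gamma$ before picking the approximating pair.
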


\begin{proof}
We have that $W_{\id}(f)\subseteq \Wa_{\id} (f)$, and the second set is closed, so one inclusion is clear. Let us prove the more intriguing reversed inclusion. Fix $\mu\in \Wa_{\id}(f)$ and $\eps>0$. We use the uniform continuity of $f$ to find $0<\gamma<\eps$ such that
$$
\|f(y_1)-f(y_2)\|<\eps \quad \text{whenever $y_1,y_2\in S_Y,\ \|y_1-y_2\|<\gamma$.}
$$
Next, we take $y_0\in S_Y$ and $y_0^*\in S_{Y^*}$ such that
$$
\re y_0^*(y_0)>1-\gamma^2/2 \quad \text{and} \quad \bigl|\mu - y_0^*(f(y_0))\bigr|<\eps.
$$
Then, by the Bishop-Phelps-Bollob\'{a}s theorem \cite{Bollobas} (see \cite{C-K-M-M-R} for this version) there exist $y\in S_Y$, $y^*\in S_{X^*}$ such that
$$
\|y-y_0\|<\gamma,\quad \|y^*-y_0^*\|<\gamma<\eps,\quad \text{and} \quad y^*(y)=1.
$$
Now,
\begin{align*}
\bigl|\mu- y^*(f(y))\bigr| & \leq |\mu - y_0^*(f(y_0))| + |y_0^*(f(y_0))-y_0^*(f(y))| + |y_0^*(f(y))-y^*(f(y))| \\ & < \eps + \|f(y_0)-f(y)\| + \|y^*-y_0^*\|\|f\| \\ &<\eps + \eps + \eps\|f\|.
\end{align*}
Moving $\eps\downarrow 0$, we get $\mu\in \overline{W_{\id}(f)}$, as desired.
\end{proof}

\begin{remark}
The proof of the result above can be easily extended to the case when $X$ is a closed subspace of a Banach space $Y$, $g=\ixy$, and the pair $(X,Y)$ has a property introduced in \cite[\S 4]{MarMerPay} which plays the rolle of the Bishop-Phelps-Bollob\'{a}s theorem. However, the most interesting examples are covered by our results in the rest of the section, namely Corollary~\ref{cor:findim-clW=widetildeW} and Proposition~\ref{prop-spatial-unifsmooth-sufficient}.
\end{remark}

We cannot extend Proposition~\ref{prop:spatial-BPB} above to all bounded functions, as the following result shows.

\begin{proposition}\label{prop:uniformlysmooth-reversed}
Let $Y$ be a Banach space. If the equality $\overline{W_{\id}(f)} = \Wa_{\id}(f)$ holds for every $f\in \ell_\infty(S_Y,Y)$, then $Y$ is uniformly smooth.
\end{proposition}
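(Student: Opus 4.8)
The plan is to reduce the statement to the known characterization of uniform smoothness in terms of numerical ranges proved by Rodr\'{\i}guez-Palacios in \cite{Rodriguez-PAMS} (and recalled in the introduction), using Theorem~\ref{main-theorem} as the bridge between the approximated spatial numerical range and the closed convex hull of the spatial one. In other words, I would not attempt a direct argument producing a modulus of smoothness; instead I would show that the hypothesis on $\Wa_{\id}$ is equivalent to the classical hypothesis $V_{\id}(f)=\overline{\ec\, W_{\id}(f)}$ for all bounded $f$, and then quote \cite{Rodriguez-PAMS}.

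First I would fix an arbitrary bounded $f\in\ell_\infty(S_Y,Y)$ and use the hypothesis to write $\overline{W_{\id}(f)}=\Wa_{\id}(f)$. Taking convex hulls of both sides, I would observe that all the sets in play are compact subsets of $\K$, so that (by Carath\'{e}odory's theorem) convex hulls of compact sets are compact, and in particular $\ec\,\overline{W_{\id}(f)}=\overline{\ec\, W_{\id}(f)}$ and $\ec\,\Wa_{\id}(f)$ is already closed. Thus the left-hand side becomes $\overline{\ec\, W_{\id}(f)}$. For the right-hand side I would apply Theorem~\ref{main-theorem} with $\Gamma=S_Y$ and $g=\id$, which gives $\ec\,\Wa_{\id}(f)=V_{\id}(f)$. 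Combining, the hypothesis forces $V_{\id}(f)=\overline{\ec\, W_{\id}(f)}$ for every bounded $f\colon S_Y\to Y$, and then the result of \cite{Rodriguez-PAMS} yields that $Y$ is uniformly smooth, finishing the proof.

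As for the main obstacle: there is essentially no new difficulty, since the hard content is already contained in Theorem~\ref{main-theorem} and in the cited theorem of \cite{Rodriguez-PAMS}. The only point needing a little care is the interchange of convex hull and closure together with the compactness of convex hulls, and this is harmless precisely because everything lives in $\K$, which is one- or two-real-dimensional. If one preferred a self-contained argument, one could instead reproduce the construction of \cite{Rodriguez-PAMS} verbatim, but routing through Theorem~\ref{main-theorem} is by far the shortest path.
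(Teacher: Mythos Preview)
Your proposal is correct and follows essentially the same route as the paper's own proof: combine the hypothesis with Theorem~\ref{main-theorem} to obtain $\overline{\ec\,W_{\id}(f)}=V_{\id}(f)$ for every bounded $f$, and then invoke \cite[Theorem~5]{Rodriguez-PAMS}. The only difference is that you spell out the (harmless) compactness and convex-hull details in $\K$, which the paper leaves implicit.
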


\begin{proof}
By Theorem~\ref{main-theorem} and the assumption, we have that
$$
\overline{\ec\, W_{\id}(f)} =V_{\id}(f)
$$
for every $f\in \ell_\infty(S_Y,Y)$, and then \cite[Theorem~5]{Rodriguez-PAMS} gives that $Y$ is uniformly smooth.
\end{proof}

The converse of this result is also true, actually more, as we will see in Proposition~\ref{prop-spatial-unifsmooth-sufficient}.

Our next goal here is to study the relation between the spatial and the approximate spatial numerical ranges when $g=\ixy$. We will get positive results when $X$ is finite-dimensional and when $Y$ is uniformly smooth, consequences of deeper results. Let us start with the case when $X$ is finite-dimensional.

\begin{proposition}
Let $\Gamma$ be a compact topological space and let $Y$ be a Banach space. Then, given a continuous function $g:\Gamma\longrightarrow Y$ with norm one, we have
$$
    W_g(f)= \Wa_g (f)
$$
for every $f:\Gamma\longrightarrow Y$ continuous.
\end{proposition}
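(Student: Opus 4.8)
The plan is to prove the two inclusions separately, with the only real work being the inclusion $\Wa_g(f) \subseteq W_g(f)$; the reverse inclusion $W_g(f) \subseteq \Wa_g(f)$ holds in general (indeed $W_g(f)$ sits inside every set whose closure is intersected in the definition of $\Wa_g(f)$, and it is automatically contained in the closed set $\Wa_g(f)$), so no compactness is needed there. For the hard inclusion, I would fix $\mu \in \Wa_g(f)$ and use Lemma~\ref{lemma:decreasing-Wa} with $\eps_n = 1/n$ to produce, for each $n$, a point $t_n \in \Gamma$ and a functional $y_n^* \in S_{Y^*}$ with $\re y_n^*(g(t_n)) > 1 - \tfrac1n$ and $|\mu - y_n^*(f(t_n))| < \tfrac1n$. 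The goal is then to pass to a limit and land a genuine element of $W_g(f)$, i.e.\ a pair $(t, y^*)$ with $y^*(g(t)) = 1$ and $y^*(f(t)) = \mu$.

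The key step is a compactness argument exploiting \emph{both} hypotheses. Since $\Gamma$ is compact, the net (or a subnet of the sequence) $\{t_n\}$ has a cluster point $t \in \Gamma$. For the functionals, $\{y_n^*\}$ lies in the weak\textsuperscript{*}-compact set $B_{Y^*}$, so it has a weak\textsuperscript{*}-cluster point $y^*$ with $\|y^*\| \le 1$. Passing to a common subnet indexed so that $t_\alpha \to t$ in $\Gamma$ and $y_\alpha^* \to y^*$ weak\textsuperscript{*}, the \emph{continuity of $g$} gives $g(t_\alpha) \to g(t)$ in norm, and combining norm convergence of $g(t_\alpha)$ with weak\textsuperscript{*} convergence of $y_\alpha^*$ yields $y_\alpha^*(g(t_\alpha)) \to y^*(g(t))$; since the left side exceeds $1 - \tfrac{1}{n_\alpha} \to 1$, we get $\re y^*(g(t)) \ge 1$, hence $y^*(g(t)) = 1$ (so in particular $\|y^*\| = 1$, i.e.\ $y^* \in S_{Y^*}$, and the defining constraint of $W_g$ is met). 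The identical argument applied to $f$, using \emph{continuity of $f$} so that $f(t_\alpha) \to f(t)$ in norm, gives $y_\alpha^*(f(t_\alpha)) \to y^*(f(t))$; since $y_\alpha^*(f(t_\alpha)) \to \mu$, we conclude $y^*(f(t)) = \mu$. Thus $\mu = y^*(f(t)) \in W_g(f)$.

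The main obstacle — and the reason both compactness of $\Gamma$ and continuity of $f,g$ are essential — is precisely the mixed-convergence step: a weak\textsuperscript{*}-convergent net of functionals evaluated on a weak\textsuperscript{*}-convergent (not norm-convergent) net of vectors need \emph{not} converge, so one genuinely needs the norm convergence $g(t_\alpha) \to g(t)$ and $f(t_\alpha) \to f(t)$ supplied by continuity on the compact domain. A minor technical point is the bookkeeping of subnets: one should select a single subnet along which all three of $t_\alpha \to t$, $y_\alpha^* \to y^*$, and the scalar sequences converge, which is routine since finite iterated subnet extraction is harmless. Once the limit pair $(t,y^*)$ is in hand, the verification that it satisfies $y^*(g(t)) = 1$ and realizes $\mu$ is immediate, completing the proof.
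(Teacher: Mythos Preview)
Your proof is correct and follows essentially the same route as the paper: fix $\mu\in \Wa_g(f)$, extract (via compactness of $\Gamma$ and weak$^*$-compactness of $B_{Y^*}$) a convergent subnet of approximating pairs, and use norm continuity of $f$ and $g$ together with weak$^*$ convergence of the functionals to pass to the limit and land in $W_g(f)$. The only cosmetic difference is that the paper begins directly with nets rather than first producing a sequence via $\eps_n=1/n$ and then passing to a subnet.
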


\begin{proof}
Only the inclusion ``$\supseteq$'' has to be proved. Fix $\mu\in \Wa_g (f)$ and consider two nets $(t_\lambda)_\lambda\in\Lambda$ and $(y^*_\lambda)_\lambda\in\Lambda$ such that
$$
y^*_\lambda(g(t_\lambda))\longrightarrow 1 \qquad \text{and} \qquad
y_\lambda^*(f(t_\lambda))\longrightarrow \mu.
$$
As $\Gamma$ is compact, we may and do suppose that the net $t_\lambda$ converges to $t_0\in \Gamma$. Also, as $B_{Y^*}$ is weak$^*$-compact, we may and do suppose that $(y^*_\lambda)$ converges to $y_0^*\in B_{Y^*}$ in the weak$^*$-topology. Now, it follows from the continuity (in norm) of the functions $g$ and $f$ that
$$
y_0^*(g(t_0))=1 \qquad \text{and} \qquad y_0^*(f(t_0))=\mu.
$$
Therefore, $\mu\in W_g(f)$, as desired.
\end{proof}

As a particular case, we can take $g=\ixy$ when $X$ is finite-dimensional.

\begin{corollary}\label{cor:findim-clW=widetildeW}
Let $X$ be a finite-dimensional Banach space. Then for every Banach space $Y$ containing $X$ isometrically and for every continuous function $f:S_X \longrightarrow Y$, we have
$$
W_{\ixy}(f)= \Wa_{\ixy} (f).
$$
\end{corollary}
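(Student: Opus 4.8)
The plan is to deduce the statement directly from the preceding Proposition by taking $\Gamma=S_X$ and $g=\ixy$. First I would record the two elementary facts that make that Proposition applicable. Since $X$ is finite-dimensional, its closed unit ball is norm-compact (Riesz's lemma / Heine–Borel), hence the unit sphere $S_X$ is a compact topological space; this plays the role of the index space $\Gamma$. Next, because $X$ sits isometrically inside $Y$, the inclusion $\ixy\colon S_X\longrightarrow Y$ is the restriction of a linear isometry and therefore continuous, and moreover $\|\ixy(x)\|=\|x\|=1$ for every $x\in S_X$, so $\|\ixy\|=1$; this plays the role of the norm-one function $g$.

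With these identifications in place, the hypotheses of the Proposition are exactly met: $\Gamma=S_X$ is compact, $g=\ixy$ is continuous of norm one, and $f\colon S_X\longrightarrow Y$ is continuous by assumption. The Proposition then gives $W_{\ixy}(f)=\Wa_{\ixy}(f)$, which is precisely the assertion of the corollary.

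I do not expect any genuine obstacle here; the content is entirely in the two observations above. The only points deserving a word of care are that finite-dimensionality of $X$ is used exactly where the Proposition needs compactness of $\Gamma$, and that the \emph{isometric} embedding hypothesis (rather than a mere bounded injection) is what guarantees $\|\ixy\|=1$, as required by Definition~\ref{def:approximate-spatial}. Everything else is a verbatim application of the Proposition, so the proof should be only a couple of lines.
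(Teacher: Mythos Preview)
Your proposal is correct and is exactly the paper's approach: the corollary is stated as a particular case of the preceding Proposition with $\Gamma=S_X$ compact and $g=\ixy$ continuous of norm one. The paper gives no proof beyond that remark, so your two observations (compactness of $S_X$ via finite-dimensionality, and continuity plus $\|\ixy\|=1$ via the isometric embedding) are precisely the verifications needed.
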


This result cannot be extended to any infinite-dimensional space $X$, as the following example shows.

\begin{example}\label{example:infdim-no-W-Wa}
For every infinite-dimensional Banach space $X$, there exist a Banach space $Y$ containing $X$ as a hyperplane isometrically, and a bounded linear operator $T:X\longrightarrow Y$ such that
$$
\overline{W_{\ixy}(T)} \neq \Wa_{\ixy}(T).
$$
\end{example}

\begin{proof}
By \cite[Remark~2.3]{MarMerPay}, there exists $Y$ and $T$ as in the statement such that $\overline{\ec\, W_{\ixy}(T)} \neq V_{\ixy}(T)$. By Theorem~\ref{main-theorem}, the same example works.
\end{proof}

We now deal with the case when $Y$ is uniformly smooth, presenting the following very general result. Observe that the only requirement on $g$ is that its image falls into $S_Y$. Recall that a Banach space $Y$ is \emph{uniformly smooth} if whenever $(y_n^*)$ and $(z_n^*)$ are sequences in $B_{X^*}$ such that $\|y_n^* + z_n^*\|\longrightarrow 2$, it follows that $\|y_n^*-z_n^*\|\longrightarrow 0$.

\begin{proposition}\label{prop-spatial-unifsmooth-sufficient}
Let $Y$ be a uniformly smooth Banach space, let $\Gamma$ be a non-empty set and $g:\Gamma\longrightarrow S_Y$. Then,
    $$
    \overline{W_g(f)}= \Wa_g(f)
    $$
for every $f\in \ell_\infty(\Gamma,Y)$.
\end{proposition}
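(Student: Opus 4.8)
The plan is to show the nontrivial inclusion $\Wa_g(f) \subseteq \overline{W_g(f)}$; the reverse inclusion is immediate since $W_g(f) \subseteq \Wa_g(f)$ and $\Wa_g(f)$ is closed. Fix $\mu \in \Wa_g(f)$ and $\eps > 0$. By definition of $\Wa_g(f)$ (and Lemma~\ref{lemma:decreasing-Wa}, say), for every $n \in \N$ we may choose $t_n \in \Gamma$ and $y_n^* \in S_{Y^*}$ with $\re y_n^*(g(t_n)) > 1 - \tfrac1n$ and $|\mu - y_n^*(f(t_n))| < \tfrac1n$. I would like to replace each $y_n^*$ by a norm-attaining functional $z_n^* \in S_{Y^*}$ with $z_n^*(g(t_n)) = 1$ and $z_n^*$ close to $y_n^*$, so that $z_n^*(f(t_n))$ is close to $\mu$; then $z_n^*(f(t_n)) \in W_g(f)$ and letting $n \to \infty$ gives $\mu \in \overline{W_g(f)}$.

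The mechanism producing such a $z_n^*$ is the uniform smoothness of $Y$, which is equivalent to $Y^*$ being uniformly convex. Since $\re y_n^*(g(t_n)) > 1 - \tfrac1n$ and $\|g(t_n)\| = 1$ (this is where the hypothesis $g(\Gamma) \subseteq S_Y$ is used), I would pick any $w_n^* \in S_{Y^*}$ that attains its norm at $g(t_n)$, i.e. $w_n^*(g(t_n)) = 1$ — such $w_n^*$ exists by Hahn–Banach applied to the unit vector $g(t_n)$. Then $\re \tfrac12(y_n^* + w_n^*)(g(t_n)) > 1 - \tfrac1{2n}$, so $\|y_n^* + w_n^*\| \to 2$, and uniform convexity of $Y^*$ forces $\|y_n^* - w_n^*\| \to 0$. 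Consequently $|w_n^*(f(t_n)) - y_n^*(f(t_n))| \le \|w_n^* - y_n^*\|\,\|f\| \to 0$, hence $w_n^*(f(t_n)) \to \mu$. Since each $w_n^*(f(t_n)) \in W_g(f)$, we conclude $\mu \in \overline{W_g(f)}$.

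The one point that needs a little care — and I expect it to be the main (minor) obstacle — is to make the passage "$\|y_n^* + w_n^*\| \to 2 \Rightarrow \|y_n^* - w_n^*\| \to 0$" rigorous with the exact definition of uniform smoothness quoted in the paper (which is phrased for sequences in $B_{Y^*}$ with $\|y_n^* + z_n^*\| \to 2$). Here $y_n^*$ and $w_n^*$ lie in $S_{Y^*} \subseteq B_{Y^*}$ and $\|y_n^* + w_n^*\| \ge 2\,\re\tfrac12(y_n^*+w_n^*)(g(t_n)) \to 2$ while also $\|y_n^* + w_n^*\| \le 2$, so the hypothesis of the quoted definition is met verbatim and the conclusion $\|y_n^* - w_n^*\| \to 0$ follows. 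One should also note that no continuity of $f$ or $g$ is needed — the argument is entirely pointwise in $t_n$ — which is exactly why the statement holds for all bounded $f$ and all $g$ with values in $S_Y$. Combining with Proposition~\ref{prop:uniformlysmooth-reversed} (its converse), this also shows that for $\Gamma = S_Y$ and $g = \id$, uniform smoothness of $Y$ is equivalent to \eqref{eq:Wa=overlineW} holding for all bounded $f$.
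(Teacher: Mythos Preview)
Your proposal is correct and follows essentially the same route as the paper's proof: pick approximating pairs $(t_n, y_n^*)$, replace $y_n^*$ by a norming functional $z_n^*$ at $g(t_n)$ via Hahn--Banach, and use $\|y_n^*+z_n^*\|\to 2$ together with uniform smoothness of $Y$ to force $\|y_n^*-z_n^*\|\to 0$, whence $z_n^*(f(t_n))\to\mu$ lies in $\overline{W_g(f)}$. Your explicit verification that the quoted definition of uniform smoothness applies, and your remark that the argument is pointwise in $t_n$ (so no continuity is needed), are exactly the right observations.
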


\begin{proof}
Only the inclusion ``$\supseteq$'' has to be proved. Fix $\lambda \in \Wa_g(f)$ and consider two sequences $(t_n)$ in $\Gamma$ and $(y_n^*)$ in $S_{Y^*}$ such that
$$
y_n^*(g(t_n))\longrightarrow 1 \qquad \text{and} \qquad y_n^*(f(t_n))\longrightarrow \lambda.
$$
For every $n\in \N$, we take $z_n^*\in S_{Y^*}$ such that $z_n^*(g(t_n))=1$. As we have that
$\|y_n^*+z_n^*\|\longrightarrow 2$,
the uniform smoothness of $Y$ gives that
$
\|y_n^*-z_n^*\|\longrightarrow 0.
$
Now, we have that
\begin{align*}
\bigl|z_n^*(f(t_n)) - \lambda \bigr| &\leq \bigl|y_n^*(f(t_n)) - \lambda \bigr| + \bigl|y_n^*(f(t_n)) - z_n^*(f(t_n)) \bigr| \\ & \leq \bigl|y_n^*(f(t_n)) - \lambda \bigr| + \|y_n^* - z_n^*\|\|f\| \longrightarrow 0.
\end{align*}
As $z_n^*(g(t_n))=1$, we get that $\lambda\in \overline{W_g(f)}$.
\end{proof}

In particular, we get the following.

\begin{corollary}
Let $Y$ be a uniformly smooth Banach space and let $X$ be a closed subspace of $Y$. Then,
$$
    \overline{W_{\ixy}(f)}= \Wa_{\ixy} (f)
    $$
for every $f\in \ell_\infty(S_X,Y)$.
\end{corollary}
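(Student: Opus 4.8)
The plan is to obtain this statement as the particular case $\Gamma = S_X$, $g = \ixy$ of Proposition~\ref{prop-spatial-unifsmooth-sufficient}, so that essentially nothing needs to be done beyond checking that the hypotheses of that proposition are met. The only point to verify is that the map $g = \ixy$ takes its values in $S_Y$, as required there. Since $X$ is a closed subspace of $Y$ (hence embedded isometrically), for every $x \in S_X$ we have $\|\ixy(x)\|_Y = \|x\|_X = 1$; that is, $\ixy(S_X) \subseteq S_Y$, and in particular $\|\ixy\| = 1$. Thus $g = \ixy$ satisfies all the requirements of Proposition~\ref{prop-spatial-unifsmooth-sufficient}.

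Applying that proposition with $Y$ uniformly smooth, $\Gamma = S_X$ and $g = \ixy$ then yields
$$
\overline{W_{\ixy}(f)} = \Wa_{\ixy}(f)
$$
for every $f \in \ell_\infty(S_X, Y)$, which is exactly the assertion. There is no real obstacle here: the corollary is a literal specialization, and the single (routine) verification is that the inclusion map has norm-one values on $S_X$, which holds simply because a subspace carries the restriction of the ambient norm.

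It may be worth adding one remark for context. Unlike the Bishop--Phelps--Bollob\'as--type argument behind Proposition~\ref{prop:spatial-BPB} and the comment following it, no continuity or regularity hypothesis on $f$ is needed here: all bounded functions $f\in \ell_\infty(S_X,Y)$ are covered, because the uniform smoothness of $Y$ supplies the needed estimate $\|y_n^* - z_n^*\| \longrightarrow 0$ independently of the points $t_n \in S_X$. In this sense the corollary simultaneously strengthens Proposition~\ref{prop:spatial-BPB} (in the direction of dropping uniform continuity) and the remark about pairs $(X,Y)$ with a Bishop--Phelps--Bollob\'as--type property, at the cost of assuming $Y$ uniformly smooth rather than just the pair being well behaved.
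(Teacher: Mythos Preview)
Your proposal is correct and matches the paper's approach exactly: the corollary is stated immediately after Proposition~\ref{prop-spatial-unifsmooth-sufficient} with the phrase ``In particular, we get the following'', and no separate proof is given. Your verification that $\ixy$ maps $S_X$ into $S_Y$ is the only point to check, and it is routine.
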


That uniform convexity is essential in the result above is shown by Proposition~\ref{prop:uniformlysmooth-reversed}.

\end{document}